\theoremstyle{plain}
\newtheorem{thm}{Theorem}[section]
\newtheorem{lem}[thm]{Lemma}
\theoremstyle{definition}
\theoremstyle{remark}
\newtheorem{rmk}[thm]{\textbf{Remark}}
\numberwithin{equation}{section}
\newcommand\abs[1]{\lvert#1\rvert}
\newcommand\RR{\ensuremath{\mathbb{R}}}
\def \g{\gamma}
\begin{document}

\title{Non-Oscillation Principle for Eventually Competitive and Cooperative Systems}

\setlength{\baselineskip}{16pt}

\author {
Lin Niu and Yi Wang\thanks{Partially supported by NSF of China No.11771414, 11471305 and Wu Wen-Tsun Key Laboratory.} \\
School of Mathematical Science\\
 University of Science and Technology of China
\\ Hefei, Anhui, 230026, P. R. China
}
\date{}

    \maketitle

\begin{abstract}
A nonlinear dynamical system is called eventually competitive (or cooperative) provided that it preserves a partial order in backward (or forward) time only after some reasonable initial transient.
We presented in this paper the Non-oscillation Principle for eventually competitive or cooperative systems, by which the non-ordering of (both $\omega$- and $\alpha$-) limit sets is obtained for such  systems; and moreover, we established the Poincar\'{e}-Bendixson Theorem and structural stability for three-dimensional eventually competitive and cooperative systems.
\end{abstract}


\section{Introduction}

A system of differential equations in $N$-space ($N\geqslant 1$) is called competitive (or cooperative) provided that all the off-diagonal entries of its linearized Jacobian matrix are nonpositive (or nonnegative).
It is now well-known that the flow of a cooperative (or competitive) system preserves the vector partial order in forward (or backward) time, by which Hirsch initiated an important research branch of so called monotone (or competitive) dynamical systems. One may refer to the monographs and recent reviews \cite{HS03,HS05,S95,S17} with references therein for the theoretical developments and their enormous applications to control, biological and  economic systems (cf. \cite{AS03,L86,Sontag07}).

It was later found that there are larger classes of systems whose flows may preserve the vector order in forward (or backward) time only after some reasonable initial transient.  Following Hirsch \cite{H85}, the flow $\phi_{t}$ generated by such a system is called as an {\it eventually cooperative} (resp. {\it competitive}) flow, i.e., there exists some $t_*\geqslant 0$ such that $\phi_{t}(x)\leqslant \phi_{t}(y)$ whenever $x\leqslant y$ and $t\geqslant t_*$ (resp. $t\leqslant -t_*$)
\footnote{
In particular,  $\phi_{t}$ is cooperative (resp. competitive) if $t_*=0$. In contrast, if $t_*>0$, then it does not require any order-preserving information for $t\in [0,t_*)$ (resp. $t\in (-t_*,0]$) at all. Consequently, the eventually cooperative (or competitive) systems defined here is totally different from the more established definition of eventual strong monotonicity in the literatures (see, e.g. \cite[Section 4.3]{HS05} or \cite[Sec. 5.3]{S95}).
}.

In the terminology of linear systems, for instance, such phenomenon is often also called {\it eventual positivity} (see \cite{DGK16JDE,NT08,SM17} and references therein) in forward (or backward) time, which means that trajectories starting from  positive initial values will become positive in forward (or backward) time only after some initial transient.
As a matter of fact, this property received rapidly-increasing attention very recently in both finite-dimensional linear systems \cite{NT08} and infinite-dimensional linear systems \cite{DGK16,DGK16JDE}, as well as applications to ordinary differential equations \cite{OTD09,SWo91}, partial differential equations \cite{D14,FGG08,GG08}, delay differential equations \cite{DGK16,DGK16JDE} and control theory \cite{A15,AL15}.

For nonlinear systems, Hirsch \cite{H85} has observed that the regular perturbation of a cooperative irreducible vector field is at most eventually cooperative rather than cooperative. Sontag and Wang \cite{WS08} later showed that singular perturbations of cooperative irreducible vector fields are just eventually cooperative as well.
As a consequence, in order to understand the impact of perturbations (either regular or singular) on cooperative (or competitive) systems, one needs to investigate the dynamical properties of the eventually cooperative (or competitive) systems.

Very recently, various examples of systems have been found in \cite{SM17}, which cannot be confirmed to be monotone (cooperative), but are eventually monotone (eventually cooperative). Moreover,
they are
not limited to near monotone (cooperative) systems in the context of perturbation theory. Since
these examples describe different types of
biological and biomedical processes, they exhibit the significance for developing the theory of eventual cooperative (or competitive) systems.

Hirsch \cite{H85} has ever studied the dynamics of eventually monotone (cooperative) systems. Sootla and Mauroy \cite{SM17} proposed a spectral characterization of eventually monotone systems from Koopman operators point of view. Based on all these results, it is reasonable to expect that eventually cooperative (or competitive) systems might possess many asymptotic properties of cooperative (or competitive) systems.

The fundamental building block of the theory
of cooperative (or competitive) systems is the Non-ordering of Limit Sets (see e.g., Smith \cite[Theorem 2.1]{S17}). In both discrete-time and continuous-time systems, the remarkable generic convergence for irreducible cooperative (monotone) systems strongly relies on the Non-ordering of Limit Sets. Therefore,
a natural question is whether Non-ordering of Limit Sets can still hold for systems which are eventually cooperative or competitive.

Hirsch \cite{H85} first tackled this question and proved the non-ordering of the {\it $\omega$-limit sets} for eventually cooperative systems. However, it deserves to point out that Hirsch's proof for the non-ordering of the $\omega$-limit sets was based on the so-called monotone convergence criterion that he discovered for eventually cooperative systems (see \cite[Theorem 2.2]{H85}). When one encounters the $\omega$-limit sets for eventually competitive systems; or equivalently, the $\alpha$-limit sets for eventually cooperative systems, the monotone convergence criterion does not work anymore. Consequently, one may recall a more general effective tool, called {\it Non-oscillation Principle} (see \cite[Lemma 6.1]{H88}), which has played a key role for guaranteeing the non-ordering of (both $\omega$- and $\alpha$-) limit sets for cooperative and competitive systems. Nevertheless, as we will explain below, it is by no means obvious to prove Non-oscillation Principle for the eventually cooperative or competitive systems.

Roughly speaking, Non-oscillation Principle means that any trajectory cannot oscillate with respect to a partial ordering ``$\leqslant$" between the vectors in $\RR^N$. More precisely, let $x(t)$ be a trajectory of $x$ for cooperative or competitive systems. A subinterval $J=[a,b]$ is called an increasing-interval if the ending points of $J$ satisfy $x(a)<x(b)$, and a decreasing-interval if $x(a)>x(b)$ (see the definition for ``$\leqslant,<,>$" in Section 2).

\vskip 2mm
\noindent$ \bullet$ (Non-oscillation Principle). {\it The trajectory of $x$ cannot have both an increasing-interval and a decreasing-interval which are disjoint.}
\vskip 2mm

In the existing literatures there are two main approaches to prove the Non-oscillation Principle for  cooperative and competitive systems. Let us call these two approaches as ``{\it Finite-Interval Approach}" and ``{\it Continuation Argument Approach}", respectively. Both of them are based on proof by contradiction.
In the following,
we will point out the critical points of each of the approaches, and further explain briefly the difficulties encountered in these approaches when one deals with the eventually cooperative or competitive systems.

Before doing that, we call that a subinterval $J=[a,b]$ is {\it steeply increasing} for the trajectory of $x$ if $J$ is increasing and the ending point $a$ is the only point $t\in J$ with $x(t)\leqslant x(a)$.

The first approach (i.e., Finite-Interval Approach) is originated in Hirsch \cite[Proposition 2.5]{H82}, who attributes the proof to L. Ito. Since then, this approach turned out to be the popular way to establish the Non-oscillation Principle in the literatures; and it was later improved in Smith \cite{S95}, Smith and Waltman \cite{SW95} and Hirsch and Smith \cite{HS05}; while an analog for discrete-mappings was given in Hirsch and Smith \cite{HS05} and Wang and Jiang \cite{WJ01}. For the sake of clarity, we here just highlight the critical points of this approach for the cooperative case; and the competitive case is analogous. Suppose that the Non-oscillation Principle does not hold. Then this approach enables one to finally obtain  without loss of generality a decreasing-interval $I_1=[a,c]$ and a steeply increasing-interval $I_2=[c,d]$, which are attached to each other at $c$, such that $a$ is the only point $s\in I_1$ with $x(s)\geqslant x(d)$.
Based on such a key fact, one can translate the interval $I_1$ to the right by a distance $\delta>0$ as $[a+\delta,c+\delta]$, which is still a decreasing interval, such that either (i) $a+\delta=c$, if  $\abs{I_1}\leqslant \abs{I_2}$; or (ii) $c+\delta=d$, if $\abs{I_1}>\abs{I_2}$. Here $\abs{I_i}$ is the interval length of $I_i$, $i=1,2,$ respectively.
Note that (i) contradicts the property of $I_2$; and (ii) contradicts the property of $I_1$.
Then the Non-oscillation Principle is obtained.



The second approach (i.e., Continuation Argument Approach) is also due to Hirsch \cite[Lemma 6.1]{H88}. Nevertheless, compared to the first approach, it actually has received little attention to date.
Here, again we just mention its key points for the cooperative case. Suppose that the Non-oscillation Principle does not hold. Then one can assume without loss of generality that there exist a decreasing-interval $J_1=[a,b]$ and a steeply increasing-interval $J_2=[c,d]$ with $b<c$.
A critical insight discovered by Hirsch (we here call as Hirsch's Continuation Argument) is that, for any nonnegative integer $n\in \mathbb{N}$, the right-extended interval $[c,d+n\abs{J_2}]$ of $J_2$ is still steeply increasing. Based on this argument, one can directly translate the decreasing interval $J_1$ to the right so that it is plugged into $[c,d+N\abs{J_2}]$ and shares the common left-endpoint of $[c,d+N\abs{J_2}]$ for some $N\in \mathbb{N}$. This then contradicts the steeply-increasing property of $[c,d+N\abs{J_2}]$, which implies the Non-oscillation Principle.

For the eventually cooperative systems, however, the situation has been changed when considering the Non-oscillation Principle. As a matter of fact,  one will encounter difficulties from both of the approaches, respectively. In the first approach,  it deserves to point out that the interval length $\abs{I_1},\abs{I_2}$ mentioned there could be strictly less than $t_*$. Hence, in either (i) or (ii),  the translation distance $\delta=\min\{\abs{I_1},\abs{I_2}\}<t_*$, by which one can not expect to preserve the decreasing property of $I_1$ at all. When such situation occurs, the Finite-Interval Approach becomes invalid completely. As for the second approach, it turns out that the insight from Hirsch can not remain correct anymore. In fact, due to the existence of $t_*$, the right-extended interval $[c,d+n\abs{J_2}]$ could not be steeply increasing even for any $n\geqslant 1$. Consequently, when
dealing with the eventually cooperative or competitive systems, one will run into the obstacles in the second approach as well.

In the present paper, we will first prove the Non-oscillation Principle for eventually cooperative or competitive systems (see Theorem \ref{nonoscillation thm}) in Section 2. As we mentioned above, the popular Finite-Interval Approach becomes invalid completely.
Although one will encounter difficulties from the second approach as well, Hirsch's continuation argument shed a light on showing the non-oscillating property of the orbits for eventually cooperative or competitive systems. We will modify Hirsch's continuation argument to prove the Non-oscillation Principle. Roughly speaking, without of loss of generality we will construct a sequence $\{I_n\}_{n=1}^\infty$ of pairwise disjoint intervals with identical length from the left to the right, such that the minimal left interval $I_1=[a_1,b_1]$ is steeply-increasing and any other $I_n$ contains no point $s$ satisfying $x(s)\leqslant x(a_1)$. Based on such construction, we will further show that any decreasing interval may finally intersect some $I_n$ after certain translations. Such new argument will essentially enable us to succeed in proving the Non-oscillation Principle for eventually cooperative or competitive systems.

Finally, in Section 3, we will utilize the Non-oscillation Principle obtained in Section 2 to prove the non-ordering of (both $\omega$- and $\alpha$-) limit sets of eventually competitive and cooperative systems in $n$-spaces, by which we will further establish the Poincar\'{e}-Bendixson Theorem and structural stability for three-dimensional eventually competitive and cooperative systems.

\section{Non-oscillation Principle and its Proof}

We first introduce some basic definitions and notations.
A nonempty closed set $C\subset \mathbb{R}^{N}$ is called a convex cone if it satisfies $C + C \subset C$, $\alpha C\subset C$ for all $\alpha \geqslant 0$, and $C \cap (-C) = \{0\}$. A convex cone is {\it solid} if $\text{Int} C \neq \emptyset$.
We write
\begin{alignat*}{2}
x &\leqslant y \quad& \text{if}&\ \ y - x \in C, \\
x &< y & \text{if}&\ \ y-x  \in C\backslash \{ 0\}, \\
x &\ll y & \text{if}&\ \ y-x \in \text{Int} C.
\end{alignat*}
Notations such as $x\geqslant (>,\gg)\, y$ have the natural meanings. A subset $U\subset \mathbb{R}^{N}$ is \textit{$p$-convex} if $x \leqslant y$ and $x, y\in U$ imply that $U$ contains the line segment between $x$ and $y$.

Let $\phi: \mathbb{R}\times X \to X$ be a flow on an open $p$-convex subset $X\subset\RR^N$.
A \textit{positive orbit} of $x$ is denoted by $\textit{O}^{+}(x)=\{\phi_{t}(x):t\geqslant 0\}$. A \textit{negative orbit} of $x$ is denoted by $\textit{O}^{-}(x)=\{\phi_{t}(x):t\leqslant 0\}$. The \textit{complete orbit} of $x$ is defined as $\textit{O}(x)=\textit{O}^{+}(x)\cup \textit{O}^{-}(x)$. An \textit{equilibrium point} is a point $x$ for which $\textit{O}(x)=\{x\}$. We denote by $E$ the set of all equilibrium points.

The flow $\phi_{t}$ is \textit{eventually cooperative} (resp. {\it eventually competitive}), if there exists some $t_{*}\geqslant 0$ such that $\phi_{t}(x)\leqslant \phi_{t}(y)$ whenever $x\leqslant y$ and $t\geqslant t_*$ (resp. $t\leqslant -t_*$). Note that $\phi_{t}$ is a flow. Then it is also easy to see that any eventually cooperative (resp. eventually competitive) flow  $\phi_{t}$ automatically satisfies the property that $\phi_{t}(x)<_r \phi_{t}(y)$ whenever $x<_r y$ and $t\geqslant t_*$ (resp. $t\leqslant -t_*$), where $<_r$ denotes any one of the relations ``$<,\ll$".

In particular,  $\phi_{t}$ is called \textit{cooperative} (resp. {\it competitive}) if $\phi_{t}$ is eventually cooperative (resp. eventually competitive) with $t_*=0$.

In the following context, we also write $x(t)$ as the trajectory of $x$, for brevity.
A time-interval $J=[a,b]\subset \mathbb{R}$ is called an {\it increasing} interval if $x(a)<x(b)$ and a {\it decreasing} interval if $x(a)>x(b)$. $J$ is {\it steeply increasing} for the trajectory of $x$ if $J$ is increasing and the ending point $a$ is the only point $t\in J$ with $x(t)\leqslant x(a)$.

\begin{thm}\label{nonoscillation thm}
{\rm (Non-oscillation principle)} Assume that $\phi_{t}$ is eventually cooperative or eventually competitive. Then any trajectory $x(t)$ cannot have both an increasing interval and a decreasing interval, provided that $x(t)$ has the complete orbit.
\end{thm}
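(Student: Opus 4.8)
The plan is to proceed by contradiction, following the spirit of Hirsch's Continuation Argument but adapting it to accommodate the transient time $t_*$. Assume the trajectory $x(t)$ possesses both an increasing interval and a decreasing interval. I will first treat the eventually cooperative case; the eventually competitive case then follows by reversing time. By shrinking and relabeling the increasing interval, one may assume without loss of generality that we have a \emph{steeply increasing} interval and a disjoint decreasing interval. Indeed, if $[p,q]$ is increasing, then among all $t\in[p,q]$ with $x(t)\leqslant x(q)$ one takes the largest such $t$, call it $a_1$, and sets $b_1=q$: then $[a_1,b_1]$ is steeply increasing. After possibly swapping the roles of the two intervals and applying the order-reversing symmetry, I expect to arrange a steeply increasing interval $I_1=[a_1,b_1]$ together with a decreasing interval lying entirely to one side of it.

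The core of the argument is the construction announced in the introduction. Starting from the steeply increasing interval $I_1=[a_1,b_1]$, I will build a sequence $\{I_n\}_{n=1}^\infty$ of pairwise disjoint intervals of identical length $\ell=\abs{I_1}$ arranged from left to right, so that $I_1$ is steeply increasing and each subsequent $I_n$ contains \emph{no} point $s$ with $x(s)\leqslant x(a_1)$. The key mechanism is the eventual-cooperativity estimate: if $\abs{I_1}\geqslant t_*$, then for any point $s$ in the right-translate $I_1+\ell=[b_1,b_1+\ell]$ one compares $\phi_{s-a_1}$ applied to the pair of points witnessing the steep increase, using that $s-a_1\geqslant \ell\geqslant t_*$, to deduce that $x(s)$ is not $\leqslant x(a_1)$. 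Thus the right-extension inherits the desired non-ordering property one translate at a time, which is precisely the replacement for Hirsch's claim that $[c,d+n\abs{J_2}]$ stays steeply increasing. The requirement $\ell\geqslant t_*$ is essential here and is the reason the naive Finite-Interval Approach breaks down.

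The main obstacle, therefore, is the case in which the available steeply increasing interval is \emph{short}, i.e.\ $\abs{I_1}<t_*$, so that the translation-by-$\ell$ comparison cannot invoke the eventual ordering. To handle this I would first secure a steeply increasing interval of length at least $t_*$ before launching the construction. The idea is to exploit the decreasing interval together with order-preservation for large times: one translates the short increasing interval rightward by a multiple of its own length enough times to accumulate a total displacement exceeding $t_*$, and shows using the steep-increase property and transitivity of $\leqslant$ that the concatenated long interval is again (steeply) increasing. Concretely, I expect to prove a lemma of the form: if $[a,b]$ is steeply increasing then so is $[a,b+k\abs{[a,b]}]$ once $k\abs{[a,b]}\geqslant t_*$, by comparing the endpoints through $\phi_{(b-a)k}$ and invoking $\phi_t$-monotonicity for $t\geqslant t_*$. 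This upgrades any steeply increasing interval to one of length $\geqslant t_*$, restoring the hypothesis needed in the previous paragraph.

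With the sequence $\{I_n\}$ in hand, I finish as in the Continuation Argument. The decreasing interval, by repeated right-translation by the common length $\ell$, can be slid so that after finitely many steps it overlaps some $I_n$ with $n\geqslant 1$; crucially, right-translating a decreasing interval by a distance $\geqslant t_*$ preserves its decreasing character, thanks to eventual cooperativity applied to the endpoint comparison $x(a)>x(b)$. Once a translated decreasing interval meets some $I_n$, I compare the value at the overlapping point $s$ with $x(a_1)$: the decreasing structure forces $x(s)\leqslant x(a_1)$ for a suitable $s\in I_n$, contradicting the defining property of $I_n$ that no such point exists. This contradiction establishes the Non-oscillation Principle. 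I anticipate that the delicate bookkeeping lies in ensuring the translation distances are simultaneously multiples of $\ell$ and bounded below by $t_*$, and in verifying that the overlap produces a genuinely ordered pair $x(s)\leqslant x(a_1)$; these are the points where I would be most careful.
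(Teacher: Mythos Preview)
Your proposal has a genuine gap at the ``upgrade lemma.'' You propose to show that if $[a,b]$ is steeply increasing with $\ell=b-a<t_*$, then $[a,\,b+k\ell]$ is again steeply increasing once $k\ell\geqslant t_*$. But this is precisely the Hirsch continuation claim that the introduction identifies as \emph{failing} for eventually cooperative systems. Applying $\phi_{k\ell}$ to the endpoint inequality $x(a)<x(b)$ only yields $x(a+k\ell)<x(b+k\ell)$; it says nothing about points $s\in(b,\,a+k\ell)$. To rule out $x(s)\leqslant x(a)$ for such an $s$ you would need monotonicity under a translation of size at most $s-b<k\ell$, and for $s$ near $b$ this is strictly less than $t_*$, so the eventual-cooperativity hypothesis is unavailable. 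There is no evident way to bootstrap from a short steeply increasing interval to a long one, and without that your construction of the $I_n$ never gets off the ground in the short case.

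The paper circumvents this by a different device that you have not anticipated. It first translates the \emph{original} increasing interval $[a,b]$ (length $A$) rightward by $t_*$ and then extracts a steeply increasing subinterval $I_0=[t_0,\,b+t_*]$ of possibly much smaller length $E\leqslant A$. Crucially, the subsequent intervals $I_n$ are translates of $I_0$ by multiples of $A$, not of $E$; they therefore have gaps of size $A-E$ between them. The induction step works because for any $s\in I_n$ one has $s-A-a\geqslant t_0-a\geqslant t_*$, so one may translate the \emph{original} increasing interval $[a,b]$ (not $I_0$) to $[s-A,s]$ and conclude $x(s-A)<x(s)$; if $x(s)\leqslant x(t_0)$ this produces a point $s-A\in I_{n-1}$ with $x(s-A)<x(t_0)$, contradicting the inductive hypothesis. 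No lower bound on $E$ is ever needed. The price paid is that the $I_n$ do not cover $[t_0,\infty)$, and a separate arithmetic argument (reducing $B$ modulo $A$ iteratively) is required to show that the decreasing-interval translates $c_l=t_0+lB$ eventually land inside some $I_n$. Your proposal misses this decoupling of the translation step $A$ from the steeply-increasing length $E$, which is the key new idea.
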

\begin{proof}
We here focus on the eventually cooperative system; while the eventually competitive system is analogous.

Suppose that there exist both a decreasing interval $J=[c,d]$ and an increasing interval $I=[a,b]$. Without loss of generality, we also assume that $d\leqslant a$. We write the interval length as $|J|=B$, $|I|= A$.

Firstly, we consider the interval $[a+t_{*}, b+t_{*}]$. Clearly, $[a+t_{*}, b+t_{*}]$ is an increasing interval.
Let $t_{0}=\sup \{ t\in [a+t_{*}, b+t_{*}] : x(t)\leqslant x(a+t_{*})\}$, and write $I_{0}=[t_{0}, b+t_{*}]$. So, $I_{0}$ is a steeply increasing interval and its length $|I_{0}|>0$.
Let $I_{1}=[t_{0}+A, b+t_{*}+A]$. We assert that
\begin{equation}\label{E:Insteeply-1}
\text{$I_{1}$ contains no point $s$ satisfying} \ \ x(s) \leqslant x(t_{0}).
\end{equation}
Indeed, suppose that there exists some $s\in I_{1}$ such that $x(s)\leqslant x(t_{0})$. Then $s-A\in I_{0}$; and hence, $s-A-a\geqslant t_{0}-a\geqslant t_{*}$. So, the translation from $[a,b]$ to $[s-A,s]$ yields that $x(s-A)<x(s)$, which implies that $x(s-A)< x(t_{0})$, a contradiction to the definition of $t_{0}$. Thus, we have obtained \eqref{E:Insteeply-1}.

Now, we define $I_{n}=[t_{0}+nA, b+t_{*}+nA]$, for $n=1,2,\cdots.$ Clearly, $|I_{n}|=|I_{0}|>0$. We show that
\begin{equation}\label{E:Insteeply}
\text{$I_{n}$ contains no point $s$ satisfying}\ \ x(s) \leqslant x(t_{0}),\quad \text{for any } n\geqslant 1.
\end{equation}
We will prove \eqref{E:Insteeply} by induction. By \eqref{E:Insteeply-1}, it is clear that \eqref{E:Insteeply} hold for $n=1$. Let \eqref{E:Insteeply} hold for $k=n-1$. To prove \eqref{E:Insteeply} for $k=n$, let us suppose that there exists $s\in I_{n}$ such that $x(s)\leqslant x(t_{0})$. Then $s-A\in I_{n-1}$ and $s-A-a>t_{0}-a\geqslant t_{*}$. Consequently, we translate $[a,b]$ to $[s-A,s]$ to obtain that $x(s-A)<x(s)$. So, $x(s-A)< x(t_{0})$, which  contradicts \eqref{E:Insteeply} for $k=n-1$. Thus, we have proved \eqref{E:Insteeply}.

For brevity, we hereafter write $t_n=t_0+nA$ and let $E=\abs{I_0}=\abs{I_n}$ for $n\geqslant 1$. So, in the context, one can also rewrite $I_n=[t_n,t_n+E]$ for $n\in \mathbb{N}$ (see {\bf Figure \ref{fig:ab}}). Here, $\mathbb{N}$ denotes the set of all nonnegative integers.
\begin{figure*}
  \newcommand{\imgwidth}{4in}
  \newcommand{\imgspace}{-10pt}
    \centerline{
    \includegraphics[width=\imgwidth]{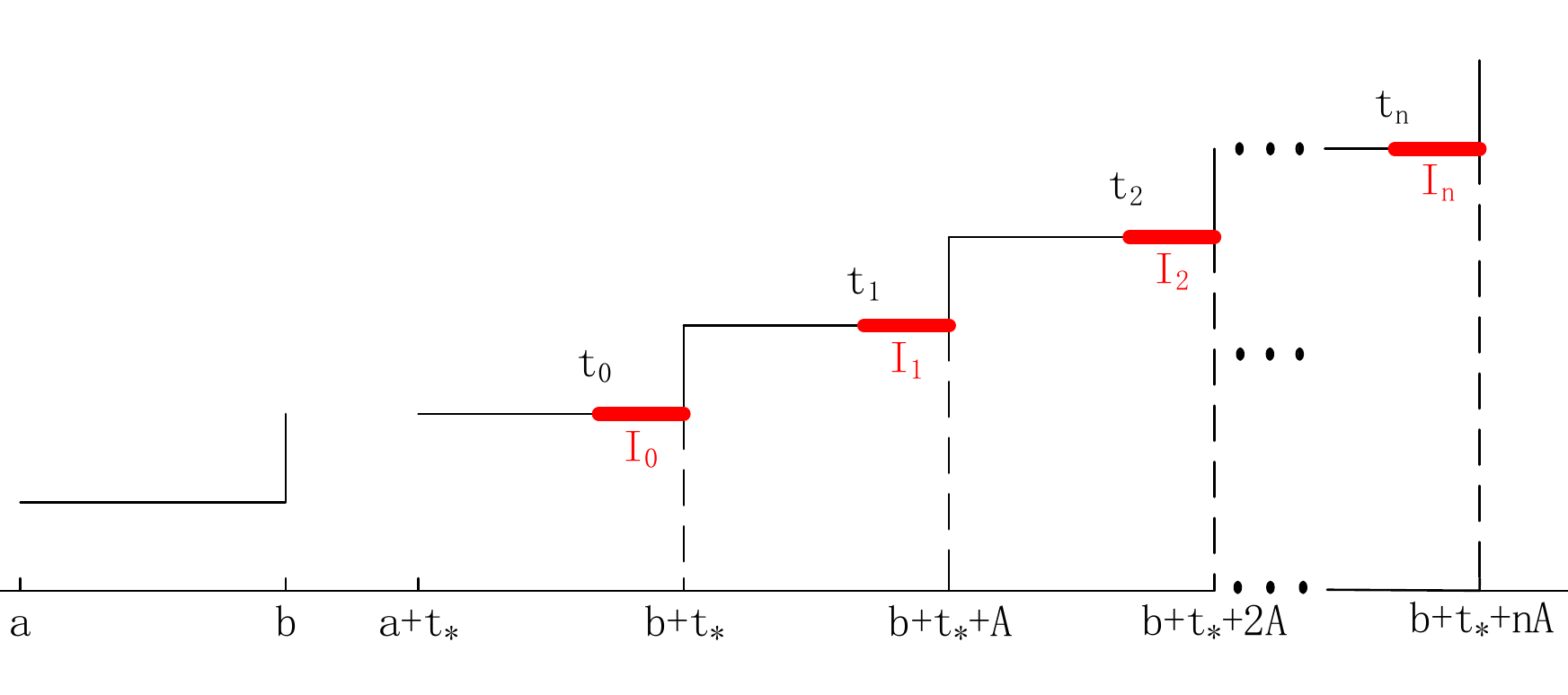}
  }
  \caption{The intervals $I_{n}=[t_{0}+nA, b+t_{*}+nA]=[t_n,t_n+E]$ for $n\in \mathbb{N}$, where $E=\abs{I_n}=\abs{I_0}$ and
   $[a,b]$ is the increasing interval.
  }\label{fig:ab}
\end{figure*}

Secondly, let $c_n=t_0+nB$, for $n\geqslant 1$. One can translate the decreasing interval $[c,d]$ to the right (because the translation distances $\min_{n\geqslant 1}\{c_n-c\}> t_0-c > t_0-a\geqslant t_*$) to obtain that
\begin{equation}\label{E:deceasing}
x(t_0)>x(c_1)>x(c_2)>\cdots>x(c_n)>x(c_{n+1})>\cdots.
\end{equation}
In the following, we {\it claim that one can finally find an index $l_*\geqslant 1$ such that $c_{l_*}\in I_{n_*}$ for some $n_*\in \mathbb{N}$}. Note that $x(t_0)>x(c_{l_*})$ from \eqref{E:deceasing}. Note that this claim either contradicts to \eqref{E:Insteeply}$_{n_*}$, or contradicts to the steeply-increasing property of $I_0$. Thus, one can complete the proof.

So, it remains to prove the claim.
For this purpose, we write $B=k_0A+R_0$, with some $k_0\in \mathbb{N}$ and $0\leqslant R_0<A$.
If $0\leqslant R_0\leqslant E$, then $c_{1}\in [t_{k_0},t_{k_0}+E]=I_{k_0}$. By choosing $l_*=1$ and $n_*=k_0$, we've done.

If $E<R_0<A$, then we write $D_0=A-R_0$. Clearly, $0<D_0<A-E$ and
\begin{equation}\label{E:c-n-expr}
c_n=t_0+nB=t_0+n(k_0+1)A-nD_0=t_{n(k_0+1)}-nD_0
\end{equation}
for any $n\geqslant 1$. Choose $n_0\geqslant 1$ such that
\begin{equation}\label{E:n0-property}
(n_0-1)D_0<A-E\leqslant n_0D_0.
\end{equation}

Case (I). When $n_0D_0\in [A-E,A]$, it follows from \eqref{E:c-n-expr} that $$c_{n_0}\in \left[t_{n_0(k_0+1)-1},t_{n_0(k_0+1)-1}+E\right]=I_{n_0(k_0+1)-1}.$$ Thus, by choosing $l_*=n_0$ and $n_*=n_0(k_0+1)-1$, we've done again (For instance, see {\bf Figure \ref{fig:abcd1}} with $n_{0}=3,k_{0}=1,l_{*}=3$ and $n_{*}=5$).
\begin{figure*}
  \newcommand{\imgwidth}{4.5in}
  \newcommand{\imgspace}{-10pt}
    \centerline{
    \includegraphics[width=\imgwidth]{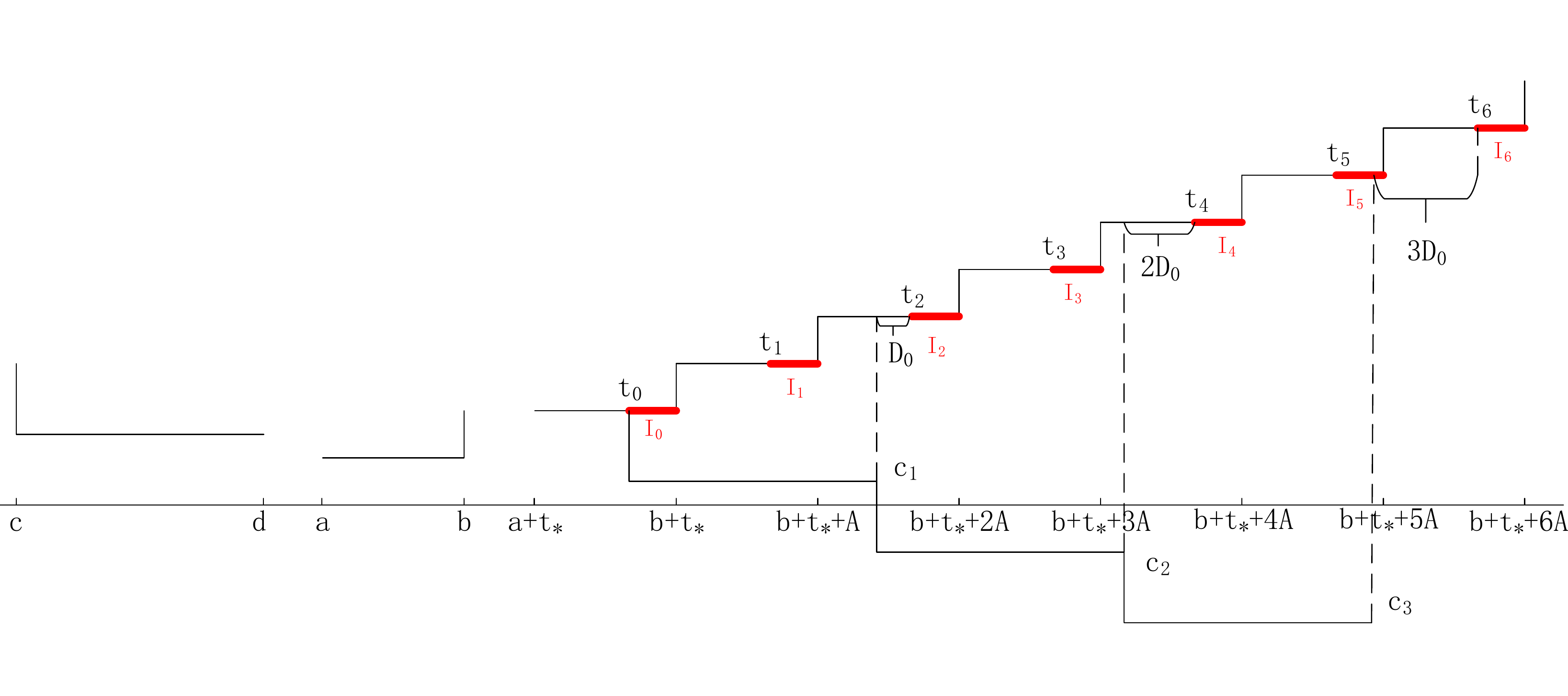}
  }
  \caption
  {
  Case (I) with $n_{0}D_{0}\in [A-E,A]$, where $c_{n_{0}}\in [t_{n_{0}(k_{0}+1)-1},t_{n_{0}(k_{0}+1)-1}+E]=I_{n_0(k_0+1)-1}$ with $n_{0}=3,k_{0}=1,l_{*}=3$ and $n_{*}=5$.
  }\label{fig:abcd1}
\end{figure*}

\vskip 1mm
Case (II). When $n_0D_0>A$, we let $D_1=n_0D_0-A$. A direct calculation from \eqref{E:c-n-expr} and the first inequality in \eqref{E:n0-property} yields that
$$0<D_1<D_0-E \,\text{  and  }\,c_{n_0}=t_{n_0(k_0+1)-1}-D_1;$$
\begin{figure*}
  \newcommand{\imgwidth}{4.5in}
  \newcommand{\imgspace}{-10pt}
    \centerline{
    \includegraphics[width=\imgwidth]{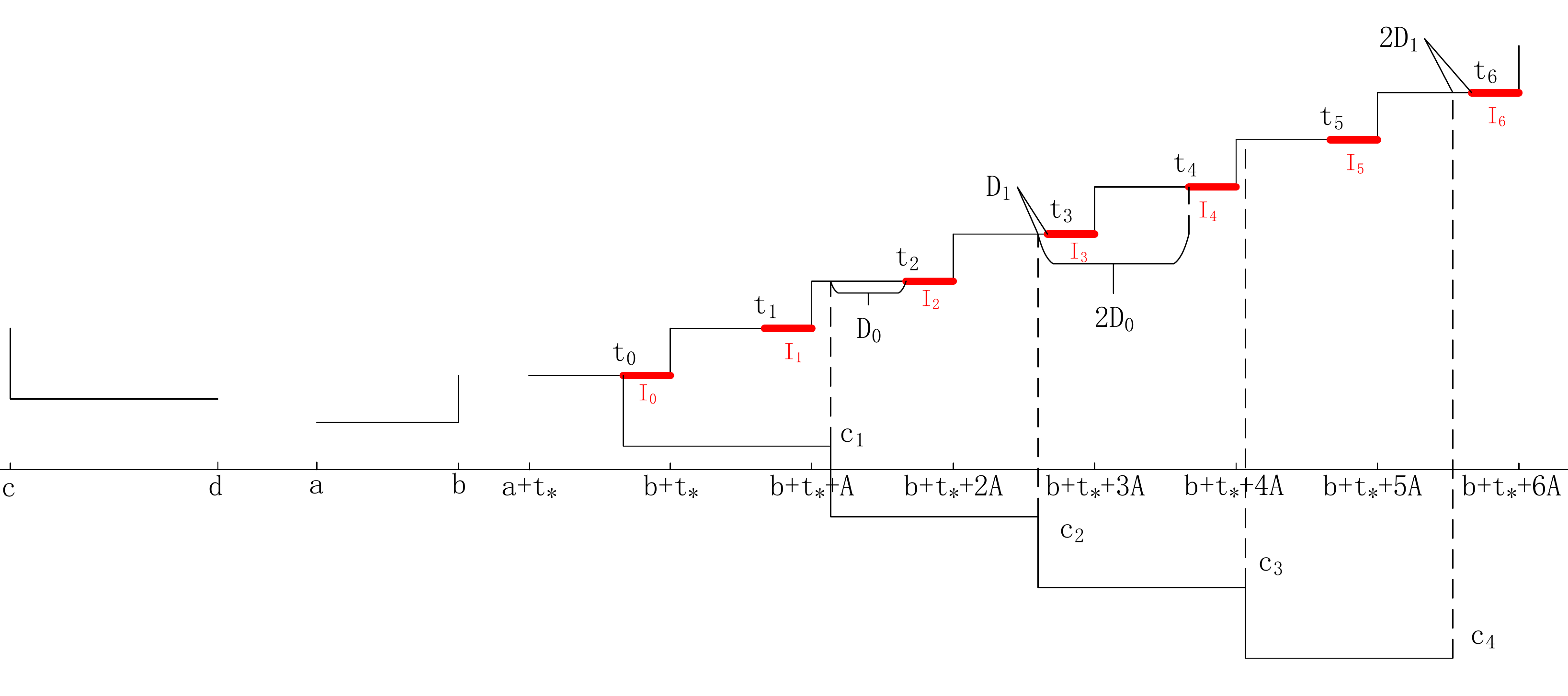}
  }
  \caption{
  Case (II) with $n_{0}D_{0}>A$, where $c_{n_{0}}=t_{n_0(k_0+1)-1}-D_1\in (t_{n_{0}(k_{0}+1)-2}+E, t_{n_{0}(k_{0}+1)-1})$ and $c_{n\cdot n_0}=t_{n\cdot[n_0(k_0+1)-1]}-nD_1$, with $n_{0}=n=2$, $k_{0}=1$ and $D_1=n_0D_0-A$.
  }\label{fig:abcd2}
\end{figure*}
and moreover, we have
\begin{equation}\label{E:nn0-prop}
c_{n\cdot n_0}=t_{n\cdot[n_0(k_0+1)-1]}-nD_1, \,\,\text{for any }n\geqslant 1.
\end{equation}
(For instance, see {\bf Figure \ref{fig:abcd2}}, for $n_{0}=n=2$, $k_{0}=1$ and $D_1=n_0D_0-A$).
For such $D_1$, similarly as in \eqref{E:n0-property}, one can
choose $n_1\geqslant 1$ such that
\begin{equation}\label{E:n1-property}
(n_1-1)D_1<A-E\leqslant n_1D_1;
\end{equation}
and moreover, by \eqref{E:nn0-prop}, we can also follow the same argument after \eqref{E:n0-property} to obtain either

(i). $c_{n_1\cdot n_0}\in I_{n_1\cdot[n_0(k_0+1)-1]-1}$, whenever $n_1D_1\in [A-E,A]$ (Thus, we've done by choosing $l_*=n_1\cdot n_0$ and $n_*=n_1\cdot [n_0(k_0+1)-1]-1$); or otherwise,

(ii).  $c_{n_1\cdot n_0}=t_{n_1\cdot[n_0(k_0+1)-1]-1}-D_2,$ where $D_2=n_1D_1-A>0$ (Hence, $0<D_2<D_1-E<D_0-2E$).

By repeating the similar argument, if necessary, we can choose two sequences $\{n_{i}\}$ and $\{D_{i}\}$, $i\in \mathbb{N}$, satisfying $$D_{i+1}=n_{i}D_{i}-A,\,\,\,(n_{i}-1)D_{i}<A-E\,\, \text{ and }\,\,0<D_{i+1}<D_{i}-E.$$
Let $l_{i}=\prod_{j=0}^in_j$. Then one has $c_{l_{i}}=t_{h_{i}}-D_{i+1}$, where $h_i$ satisfies
the recursion $h_{i+1}=n_{i+1}\cdot h_{i}-1$ with $h_{0}=n_{0}(k_{0}+1)-1$. Now, we write $D_{0}=pE+F$ with some integer $p\geqslant 1$ and some $F\in (0,E]$. Then, one has $0<D_{p}<D_{0}-pE=F\leqslant E$. As a consequence, there exists an integer $n_{p}\geqslant 1$ such that $n_{p}D_{p}\in [A-E,A]$. So, by choosing $l_{*}=n_{p}\cdot l_{p-1}$ and $n_{*}=h_p=n_{p}\cdot h_{p-1}-1$, we have $c_{l_{*}}\in I_{n_{*}}$. Thus, we've proved the claim.
\end{proof}

\begin{rmk}
Motivated by the proof of Theorem  \ref{nonoscillation thm}, one can further obtain the {\it Non-oscillation Principle} for the {\it discrete-time} eventually competitive or cooperative systems.  As a matter of fact, the proof is even more straightforward for the discrete-time systems.
Here we just mention the discrete-time eventually competitive systems, for instance. The discrete-time eventually cooperative systems are analogous.

Let $X\subset\RR^N$ be an open subset and $T:X\to T(X)\subset\mathbb{R}^N$ be a homeomorphism (For example, $T$ is the Poincar\'{e} map associated with a time-periodic ODE system on $X$). We denote
 $\{T^{m}z: m\in \mathbb{Z}\}$ the orbit of $z\in X$. $T$ is said to be eventually competitive if there exists an integer $n_*\geqslant 1$ such that
\begin{equation}\label{T-even-comp}
x<y \,\text{ whenever } \,T^{n}x<T^{n}y\, \text{ with some }\, n \geqslant n_*.
\end{equation}
Let $[m,n]=\{m,m+1,\cdots,n\}\subset \mathbb{Z}$ be an integer segment. $[m,n]$ is said to be an increasing (resp. decreasing) segment if $T^{m}z< ({\rm resp. >})\, T^{n}z$.

In the following, we prove that the orbit of $z$ cannot have both an increasing segment and a decreasing segment, provided that $z$ has the complete orbit. Without loss of generality, suppose that the orbit of $z$ contains a decreasing segment $[m,n]$ and an increasing segment $[k,l]$ with $m<n<k<l$. Let $A=n-m, B=l-k$, then $A,B> 0$ and there exist integers $p,q$ such that $pA=qB$. By translating the $[m,n]$ to the left, it follows from eventually competitive property that there are decreasing segments $I_{i}=[n-n_*-(i+1)A,n-n_*-iA]$, for $i=0,1,\cdots,p-1$, where $n_*$ is as in \eqref{T-even-comp}. In particular, one has $T^{n-n_*-pA}z>T^{n-n_*}z$.

On the other hand, since $l-(n-n_*)>n_*$, we translate increasing segment $[k,l]$ to the segments $[c_j,c_{j-1}]$, $j=1,2,\cdots,q$, so that
$T^{c_{j}}z<T^{c_{j-1}}z$. Here $c_{j}=n-n_*-jB$, for $j=0,1,2,\cdots,q$. As a consequence, one has $T^{n-n_*-qB}z<T^{n-n_*}z$.
Noticing $pA=qB$, we have obtain a contradiction. Thus, we have proved Non-oscillation Principle for the discrete-time eventually competitive systems.
\end{rmk}

\section{Eventually competitive and cooperative systems: Limit Sets, Poincar\'{e}-Bendixson Theorem and Structural Stability}
In this section, we will utilize the {\it Non-oscillation Principle} established in Section 2 to prove non-ordering of (both $\omega$- and $\alpha$-) limit sets of eventually competitive and cooperative systems in $N$-spaces, as well as the Poincar\'{e}-Bendixson Theorem and structural stability for 3-dimensional eventually competitive and cooperative systems.

As mentioned in the introduction, Non-ordering of Limit Sets was regarded as the fundamental building block of the theory of competitive and cooperative systems (see, e.g. \cite{S17}). For eventually cooperative systems, Hirsch \cite{H85} has proved the non-ordering of the $\omega$-limit sets by using
the monotone convergence criterion (see \cite[Theorem 2.2]{H85}). However, it {\it remains unknown} for the non-ordering of the $\omega$-limit sets for eventually competitive systems; or equivalently, the $\alpha$-limit sets for eventually cooperative systems. Under such situations, the monotone convergence criterion approach does not work anymore.

By virtue of Non-oscillation Principle (Theorem \ref{nonoscillation thm}) established in Section 2, we present the following theorem on Non-ordering of (both $\omega$- and $\alpha$-) limit sets for eventually competitive and cooperative systems. Hereafter, we always mean the limit sets by both $\omega$- and $\alpha$- limit sets.

\begin{thm}\label{nonordering thm}
{\rm (Non-ordering of limit sets)} Any limit set $L$ of an eventually competitive or cooperative system cannot contain two points related by $\ll$.
\end{thm}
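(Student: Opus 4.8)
The plan is to argue by contradiction using the Non-oscillation Principle (Theorem \ref{nonoscillation thm}). Suppose, for the eventually cooperative case, that some limit set $L$ contains two points $p \ll q$. The key structural fact I would exploit is that limit sets are invariant under the flow: if $p,q \in L = \omega(x)$ (or $\alpha(x)$), then the entire complete orbits $O(p)$ and $O(q)$ lie in $L$. The idea is to show that the strict relation $p \ll q$ between two points of the \emph{same} orbit would force that orbit to possess both an increasing interval and a decreasing interval, contradicting the non-oscillation conclusion.

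The main obstacle is that $p$ and $q$ are \emph{a priori} on different orbits (or at least there is no reason for one to lie on the trajectory of the other), so I cannot immediately speak of a single trajectory $x(t)$ that realizes both $x(a)<x(b)$ and $x(c)>x(d)$. The standard device to overcome this is to produce, inside $L$, a single trajectory whose endpoints are ordered by $\ll$ in two opposite senses along disjoint time intervals. First I would use the fact that $L$ is chain recurrent, or more elementarily that limit sets of a flow are connected and internally chain transitive, to connect $p$ to $q$ and back; but a cleaner route is the following. Since $p \ll q$ and both lie in the limit set, their forward images under $\phi_t$ for $t \geqslant t_*$ satisfy $\phi_t(p) \ll \phi_t(q)$ by the eventual-cooperativity (the strict relation is preserved, as noted in the excerpt just before the theorem). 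The plan is then to pick a point $z \in L$ and exploit the recurrence of $z$ within $L$ together with the ordering to extract an oscillating trajectory.

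Concretely, here is the route I would carry out. Because $p,q\in L$ with $p\ll q$, I would choose a trajectory $z(t)$ whose limit set is exactly $L$ and whose orbit accumulates on both $p$ and $q$; equivalently, I would work with the trajectory through a point of $L$ itself, using that $O(p)\subset L$ is complete. Since $p \ll q$ there are neighbourhoods $U\ni p$, $V\ni q$ with $u \ll v$ for all $u\in U$, $v\in V$. Using that the orbit of $z$ enters any neighbourhood of any point of its limit set at arbitrarily large positive (and, for $\alpha$-limit sets, negative) times, I can find times $t_1<t_2<t_3<t_4$ with $z(t_1),z(t_3)$ near $p$ and $z(t_2),z(t_4)$ near $q$, arranged so that $z(t_1)\ll z(t_2)$ gives an increasing interval $[t_1,t_2]$ while $z(t_3)\gg z(t_4)$ gives a disjoint decreasing interval $[t_3,t_4]$. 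The delicacy is to order the visits so that one increasing and one decreasing interval are genuinely disjoint; this is where the recurrence of $z$ in $L$ is essential, since it guarantees infinitely many returns near both $p$ and $q$, from which the required interleaving $t_1<t_2<t_3<t_4$ can be selected. With such disjoint increasing and decreasing intervals on the single complete trajectory $z(t)$, Theorem \ref{nonoscillation thm} is contradicted, completing the cooperative case; the eventually competitive case follows by the symmetric argument (reversing time), as is standard.

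I expect the hard part to be precisely the extraction of \emph{disjoint} monotone intervals on one trajectory from the mere fact that two ordered points lie in the limit set. The cleanest formulation may avoid recurrence arguments by instead taking $z(t)$ to be a point of $L$ and applying invariance directly: since $\phi_t(p),\phi_t(q)\in L$ for all $t$ and $\phi_t(p)\ll\phi_t(q)$ for $t\geqslant t_*$, one can pass to an auxiliary trajectory through a suitable point of $L$ on which both orderings are witnessed. If this direct invariance route does not immediately yield disjointness, the fallback is the chain-transitivity of limit sets, which reliably produces the interleaved returns. Either way, once a single complete trajectory exhibiting both an increasing and a disjoint decreasing interval is in hand, the Non-oscillation Principle closes the argument.
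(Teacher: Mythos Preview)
Your core idea matches the paper's proof exactly: pick open neighbourhoods $U\ni p$, $V\ni q$ with $U\ll V$, use that the trajectory whose limit set is $L$ visits both $U$ and $V$ infinitely often, and extract four times giving disjoint increasing and decreasing intervals; then invoke Theorem~\ref{nonoscillation thm}. Two corrections, though.

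First, your visiting pattern is misstated: with $z(t_1),z(t_3)\in U$ (near $p$) and $z(t_2),z(t_4)\in V$ (near $q$) you obtain \emph{two increasing} intervals, since then $z(t_3)\ll z(t_4)$, not $z(t_3)\gg z(t_4)$ as you write. The correct interleaving (and what the paper uses) is $x(t_1),x(t_4)\in U$ and $x(t_2),x(t_3)\in V$, so that $[t_1,t_2]$ is increasing while $[t_3,t_4]$ is decreasing.

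Second, the digression about chain recurrence, invariance of $L$, and working with a point $z\in L$ is unnecessary. The trajectory $x(t)$ with $L=\omega(x)$ (or $\alpha(x)$) already returns to every neighbourhood of every point of $L$ at arbitrarily large (resp.\ negative) times by the very definition of a limit set, so you can take the trajectory to be $x$ itself from the start. The ``obstacle'' you identify --- that $p$ and $q$ may lie on different orbits inside $L$ --- is not an obstacle at all, since the argument never uses any orbit through $p$ or $q$, only the orbit of $x$.
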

\begin{proof}
We here just consider the $\omega$-limit set of an eventually competitive system; and other cases are analogous. Suppose that $L$ contains $y$ and $z$ such that $y\ll z$, then there exist some neighborhoods $U$ and $V$ of $y$ and $z$, respectively; such that $\tilde{y}\ll \tilde{z}$ for any $\tilde{y}\in U$ and $\tilde{z}\in V$. Then one can choose $t_{1}, t_{2}, t_{3}, t_{4}>0$ with $t_{1}<t_{2}<t_{3}<t_{4}$ such that $x(t_{1}), x(t_{4})\in U$ and $x(t_{2}), x(t_{3})\in V$. So, the interval $[t_{1},t_{2}]$ is increasing and the interval $[t_{3},t_{4}]$ is decreasing, which contradicts Theorem \ref{nonoscillation thm}.
\end{proof}

\begin{rmk}\label{nonordering of periodic orbit}
In particular, for any nontrivial periodic orbit $\g$ of an eventually competitive or cooperative system, Non-oscillation Principle (Theorem \ref{nonoscillation thm}) guarantees that $\g$ cannot contain two points that are related by ``$<$". Since the proof is same as that of \cite[Prop.3.3.3]{S95}, we omit it here.
\end{rmk}

An instant consequence of Non-ordering of limit sets (Theorem \ref{nonordering thm}) is the following:

\begin{thm}\label{topol-equivalent}
The flow on a compact limit set $L$ of an eventually competitive or cooperative system is topologically equivalent to a flow on a compact invariant set of a Lipschitz systems of differential equation in $\mathbb{R}^{N-1}$.
\end{thm}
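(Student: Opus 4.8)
The plan is to reduce the dimension by one via projection along a direction transverse to the order structure, using the non-ordering of the limit set as the crucial geometric input. The key observation is that Theorem \ref{nonordering thm} tells us that the compact limit set $L$ contains no two points related by $\ll$; that is, $L$ is \emph{unordered} with respect to the interior relation. Such a set is necessarily "flat" with respect to the cone direction, and can be realized as a Lipschitz graph over a hyperplane. I would first fix a vector $v\in \text{Int}\,C$ (the cone is solid, so such $v$ exists) and choose a hyperplane $H\subset\RR^N$ transverse to $v$, so that $\RR^N = H \oplus \RR v$. Let $\pi:\RR^N\to H$ denote the projection along $v$.

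The first main step is to show that $\pi$ restricted to $L$ is injective. If $x,y\in L$ with $\pi(x)=\pi(y)$ and $x\neq y$, then $x-y$ is a nonzero multiple of $v\in\text{Int}\,C$, so either $x\ll y$ or $y\ll x$, contradicting Theorem \ref{nonordering thm}. Hence $\pi|_L$ is a continuous injection on the compact set $L$, and therefore a homeomorphism onto its image $\pi(L)\subset H\cong\RR^{N-1}$. The second step is to promote this to a \emph{Lipschitz} graph: I would argue that the inverse map $(\pi|_L)^{-1}:\pi(L)\to L$ is Lipschitz, which follows from a standard cone-estimate. Concretely, for $x,y\in L$ one controls the $v$-component of $x-y$ by the $H$-component: since $L$ is unordered, $x-y$ can never lie in $C\cup(-C)$ (except when $x=y$), and a solid closed cone $C$ together with its negative leaves a definite "gap" around the axis $\RR v$, giving an estimate of the form $\abs{(x-y)\cdot e_v}\leqslant K\,\norm{\pi(x)-\pi(y)}$ for some constant $K$ depending only on $C$ and $v$. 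This Lipschitz bound lets one extend the graph function over $\pi(L)$ to a globally Lipschitz function $g:\RR^{N-1}\to\RR v$ (by Kirszbraun's or McShane's extension theorem), whose graph agrees with $L$ over $\pi(L)$.

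The final step is to transport the flow. The graph map $G(w)=w+g(w)\,e_v$ is a Lipschitz homeomorphism from $\RR^{N-1}$ onto its image, carrying $\pi(L)$ homeomorphically onto $L$. Conjugating the flow $\phi_t$ restricted to the invariant set $L$ through $G^{-1}=\pi|_L$ produces a flow $\psi_t$ on the compact invariant set $\pi(L)\subset\RR^{N-1}$ that is topologically equivalent to $\phi_t|_L$. One then checks that $\psi_t$ extends to, or is generated by, a Lipschitz vector field on $\RR^{N-1}$: the vector field $\tilde{f}=D\pi\circ f\circ G$ (where $f$ is the original vector field) is Lipschitz wherever $f$ and $G$ are, and $\pi(L)$ is a compact invariant set for the induced flow. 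This yields precisely the desired Lipschitz system in $\RR^{N-1}$ with $\pi(L)$ as the compact invariant set.

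I expect the main obstacle to be the Lipschitz regularity rather than the topological reduction: establishing injectivity and continuity of $\pi|_L$ is immediate from unorderedness, but obtaining a \emph{uniform} Lipschitz constant for the inverse graph map requires carefully exploiting the solidity of $C$ to quantify the angular separation between the unordered set $L$ and the cone axis, and then invoking a Lipschitz extension theorem to define the system on all of $\RR^{N-1}$. A secondary technical point is verifying that the induced vector field inherits Lipschitz continuity so that the reduced system is genuinely a Lipschitz system of differential equations, as claimed; this should follow from the chain rule together with the Lipschitz character of $G$ and $\pi$, but the direction-of-flow (transversality of the original flow to $H$ on $L$) must be confirmed so that the reparametrized time remains well defined.
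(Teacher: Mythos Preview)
Your proposal is correct and follows essentially the same approach as the paper: project along a vector $v\in\text{Int}\,C$ onto a transverse hyperplane, use Theorem~\ref{nonordering thm} to get injectivity of the projection on $L$, and then run the Lipschitz graph/extension argument to push the flow down to a Lipschitz system on $\RR^{N-1}$. The paper's own proof is in fact briefer than yours---after noting injectivity it simply cites \cite[Theorem~3.3.4]{S95} for the remaining Lipschitz construction---so you have correctly reconstructed the details that live in that reference; your closing worry about ``reparametrized time'' is unnecessary, since the conjugated flow $\psi_t=\pi\circ\phi_t\circ(\pi|_L)^{-1}$ keeps the same time parameter and no transversality of the flow to $H$ is required.
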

\begin{proof}
The proof is similar to the proof of Theorem \cite[Theorem 3.3.4]{S95}. In fact, one can take the mapping
\begin{equation}\label{theta}
\Theta: \mathbb{R}^N\to H_{v}; x\mapsto x-(x,v)v;
\end{equation}
where $v$ lies in the interior of the cone and $H_{v}$ denotes the hyperplane orthogonal to $v$.
By virtue of Theorem \ref{nonordering thm}, one has $\Theta(a)\neq \Theta(b)$ for any distinct $a,b\in L$, which means the restriction of $\Theta$ on $L$ is a one-to-one map. Then one can repeat the argument in the proof of Theorem \cite[Theorem 3.3.4]{S95} to obtain that $L$ is topologically conjugate to a compact invariant set of a Lipschitz-continuous vector field in $\mathbb{R}^{N-1}$.
\end{proof}
\vskip 3mm

In the following, we will focus on 3-dimensional eventually competitive and cooperative systems.
We will present the corresponding Poincar\'{e}-Bendixson Theorem and structural stability for 3-dimensional eventually competitive and cooperative systems.

Before doing that, we give some notations. Let $L,M\subset \mathbb{R}^{N}$ and we write $L\preceq M$, if for any $z\in L$ there exists $w\in M$ such that $z\leqslant w$, and vice versa. In particular, when $M=\{p\}$ is a singleton, $L\preceq \{p\}$ means that $z\leqslant p$ for any $z\in L$; by which we also write as $L \leqslant p$.
We further write $L\ll p$, if $z\ll p$ for any $z\in L$. Notations such as $L\succeq M,L \geqslant p$  have the natural meanings.

\begin{thm}\label{P-B thm}
{\rm (Poincar\'{e}-Bendixson Theorem)} A compact limit set $L$ of an eventually competitive or cooperative system in $\mathbb{R}^{3}$ that contains no equilibrium points is a periodic orbit.
\end{thm}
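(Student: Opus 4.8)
The plan is to reduce the three-dimensional Poincar\'{e}-Bendixson problem to the classical planar case via the projection $\Theta$ introduced in Theorem \ref{topol-equivalent}. By that theorem, the flow on the compact limit set $L$ is topologically equivalent to a flow on a compact invariant set of a Lipschitz system in $\RR^{2}$. The idea is that once we push $L$ down to a two-dimensional system, we can invoke the standard Poincar\'{e}-Bendixson theorem for planar flows, for which a compact limit set containing no equilibria is a periodic orbit. The one delicate point is that the classical planar Poincar\'{e}-Bendixson theorem applies to $\omega$- or $\alpha$-limit sets of the planar flow itself, so I first need to verify that $\Theta(L)$ is genuinely a limit set (or at least a compact limit-set-like invariant object on which the planar argument runs) for the reduced flow, and that the topological equivalence carries periodicity back upstairs.

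First I would fix an eventually competitive (or cooperative) system in $\RR^{3}$ and let $L$ be its compact limit set with $L\cap E=\emptyset$. By Theorem \ref{nonordering thm}, $L$ contains no two points related by $\ll$, so the restriction $\Theta|_{L}$ is injective onto the hyperplane $H_{v}\cong\RR^{2}$, and by Theorem \ref{topol-equivalent} the flow on $L$ is topologically equivalent to a planar Lipschitz flow on the compact invariant set $K=\Theta(L)$. Because topological equivalence preserves equilibria, $K$ contains no equilibria of the planar flow. Next I would argue that $K$ is itself a limit set of the reduced planar flow: since $L$ is a limit set of a trajectory $x(t)$ upstairs, its image $\Theta(x(t))$ is a trajectory of the reduced flow whose limit set is exactly $K$.

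With $K$ a compact, equilibrium-free limit set of a planar Lipschitz flow, the classical Poincar\'{e}-Bendixson theorem applies directly and yields that $K$ is a periodic orbit of the planar flow. Transporting this back through the topological equivalence, the corresponding subset of $L$ is a closed orbit, and since the equivalence is a homeomorphism intertwining the two flows, $L$ itself must be a periodic orbit of the original three-dimensional system. I expect the main obstacle to be the careful bookkeeping in the middle step: confirming that $K=\Theta(L)$ really is a limit set for the reduced flow (rather than merely a compact invariant set), so that the hypotheses of the classical planar theorem are met cleanly, and ensuring the reduced flow is a genuine flow to which Poincar\'{e}-Bendixson applies. This is essentially the three-dimensional reduction scheme that parallels the competitive-systems treatment in Smith \cite{S95}, now made available in the eventual setting precisely because Theorem \ref{nonordering thm} supplies the non-ordering that powers the injectivity of $\Theta$.
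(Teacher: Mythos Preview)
Your reduction to the planar system is the right starting point, but there is a real gap in the middle step. You assert that ``$\Theta(x(t))$ is a trajectory of the reduced flow whose limit set is exactly $K$''. This does not follow. The topological equivalence furnished by Theorem~\ref{topol-equivalent} conjugates the original flow to the Lipschitz planar flow \emph{only on} $L$ and $\Theta(L)$; off $L$, the map $\Theta$ does not send $\phi_t$-orbits to orbits of the planar system. The trajectory $x(t)$ whose limit set is $L$ need not lie in $L$ at all, so $\Theta(x(t))$ has no reason to be a solution curve of the planar vector field, and you have no way to conclude that $K=\Theta(L)$ is a limit set of the reduced flow. Consequently the hypotheses of the classical planar Poincar\'{e}--Bendixson theorem are not met.

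The paper avoids this obstacle by passing through an \emph{intrinsic} property of $L$: any limit set is chain-recurrent, and chain-recurrence is a topological invariant, so $\Theta(L)$ is a compact, connected, equilibrium-free, chain-recurrent invariant set for the planar flow. For such planar sets one knows (cf.~\cite[Theorem 4.1]{S95}) only that $\Theta(L)$ is either a single periodic orbit \emph{or an annulus of periodic orbits}. Ruling out the annulus is then a genuine additional argument: the paper uses the eventually competitive property to show that for any periodic orbit $\gamma$ in the interior of $\Theta(L)$ one has $L\preceq\gamma$ or $L\succeq\gamma$, and then plays three such periodic orbits against one another to manufacture two ordered points on a single periodic orbit, contradicting Remark~\ref{nonordering of periodic orbit}. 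Your proposal skips this entire annulus-exclusion step, and without it the argument is incomplete.
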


\begin{proof}
The proof is based on our Theorem \ref{topol-equivalent} and the argument in \cite[Section 4]{S95} (see also \cite[Thoerem 1]{H90}) for competitive and cooperative systems. For the sake of completeness, we
give more detail.

Again, we focus on the eventually competitive case; while the eventually cooperative case is analogous. For explicitness, we let $L=\omega(x)$. By Theorem \ref{topol-equivalent}, the flow $\phi_t$ on $L$ is  topologically conjugate to a flow $\psi_t$ on the compact invariant set $\Theta(L)\subset\mathbb{R}^2$, where $\Theta$ is defined as in \eqref{theta}. As a consequence, $\Theta(L)$ is a chain-recurrent set with respect to $\psi_t$, because $L$ is a chain-recurrent set for $\phi_t$. Recall that $\Theta(L)$ contains no equilibrium of $\psi_t$ (since $L$ contains no equilibrium). Then, by following the same argument in \cite[Theorem 4.1]{S95}, the chain-recurrence of $\Theta(L)$ implies that it is either a single periodic orbit or an annulus of periodic orbits.

It remains to rule out that $\Theta(L)$ is an annulus of periodic orbits. Suppose that $\g$ is a periodic orbit in $L$ with $\Theta(\g)\subset {\rm Int}(\Theta(L))\subset \mathbb{R}^2$. Then the periodic orbit $\Theta(\g)$ separates $\Theta(L)$ into two components. Fix $a,b\in L$ such that $\Theta(a),\Theta(b)$ belong to the different component of $\Theta(L)\setminus \Theta(\g)$. Since $\{\phi_t(x)\}_{t\geqslant 0}$ repeatedly revisit the neighborhoods of $a$ and $b$, $\Theta(\phi_t(x))$ will intersect $\Theta(\g)$ at a sequence $t_k\to \infty$. So, for each $k\geqslant 1$, let $z_k\in \g$ be such that $\Theta(\phi_{t_k}(x))=\Theta(z_k)$; and hence, one may choose a subsequence, if necessary, such that $\phi_{t_k}(x)\ll z_k$ (resp. $z_k\ll \phi_{t_k}(x)$) for all $k\geqslant 1.$  Therefore, for any $s>0$, if we choose $t_k$ so large that with $s-t_k\leqslant -t_*$, then the {\it eventually competitive property} implies that $\phi_{s}(x)=\phi_{s-t_k}(\phi_{t_{k}}x)< (\text{resp. } >)\, \phi_{s-t_k}(z_{k})\in \g$. Given any $y\in L$, let $\tau_{n}\to \infty$ be such that $\phi_{\tau_{n}}(x)\to y$. Then, for such $\tau_{n}$, there exists $w_{n}\in \g$ such that $\phi_{\tau_{n}}(x)< (\text{resp. } >)\, w_{n}$. By choosing a subsequence if necessary, we obtain $w_{n}\to w\in \g$ and $y\leqslant (\text{resp. } \geqslant)\, w$. Hence, by arbitrariness of $y$, one has $L\preceq \g$ or $L\succeq \g$.

Since $\Theta(L)$ is an annulus of periodic orbits, one can choose three different periodic orbits $\g_{i}\in L$ such that $\Theta(\g_{i})\subset {\rm Int}(\Theta(L))$, $i=1,2,3$. Moreover, each periodic orbit $\g_{i}$ has the property that either $L\preceq \g_{i}$ or $L\succeq \g_{i}$. Consequently, one can consider without loss of generality the first two periodic orbits $\g_{1}, \g_{2}$ satisfying $L\preceq \g_{1}$ and $L\preceq \g_{2}$.
So, let some $u\in \g_{1}\subset L$, then there exists $w\in \g_{2}$ such that $u < w$. Note also $w\in \g_{2} \subset L$, then there exists $v\in \g_{1}$ such that $w < v$. Thus, one has $u, v \in \g_{1}$ satisfying $u < v$, which contradicts Remark \ref{nonordering of periodic orbit}.
\end{proof}

Motivated by the work of Hirsch \cite{H90}, we will discuss at the end of this section the structural stability for 3-dimensional eventually competitive and cooperative systems.

Let $M\subset X (\subset \mathbb{R}^3)$ be a smooth compact manifold with boundary $\partial M$, and $\mathscr{F}^{1}(M)$ denote the space of $C^{1}$ vector fields on $M$ which are transverse to $\partial M$. A vector field $H\in \mathscr{F}^{1}(M)$ is called \textit{structurally stable}, if there exists a neighborhood $U$ of $H$ in $\mathscr{F}^{1}(M)$ such that for any $G\in U$ there is a homeomorphism $g$ taking $H$-orbits to $G$-orbits.

For brevity, we hereafter use the term ``cycle" refers to a nontrivial periodic orbit of $H$.\vskip 1mm

A vector field $H\in \mathscr{F}^{1}(M)$ satisfies the \textit{Morse-Smale} conditions, if the following conditions hold (see, e.g. Smale \cite{Smale60} or Hirsch \cite[p.1231]{H90}):

(H1) All equilibria and cycles are hyperbolic and their stable and unstable manifolds intersect only transversely;

(H2) The numbers of equilibria and cycles are finite;

(H3) Every limit set is an equilibrium or a cycle.

\vskip 1mm
\noindent In particular, $H$ is called \textit{Kupka-Smale}, if only (H1) holds.
\vskip 2mm

Morse-Smale conditions are known to be the sufficient conditions for structural stability (see, e.g. \cite{Palis1969}); and structurally stable vector fields are Kupka-Smale (see, e.g. \cite{Markus1961}). Although Kupka-Smale conditions are generic, the structurally stable vector fields are not generic when the dimension of the system is larger than 2 (see, e.g. \cite{Newhouse1970,Newhouse1980}). On the other hand, Hirsch \cite{H90} succeeded in proving that, for cooperative or competitive systems on $\mathbb{R}^{3}$, $H$ is Kupka-Smale if and only if it satisfies Morse-Smale conditions; Consequently, for cooperative or competitive $3$-dim systems, Kupka-Smale (and hence, Morse-Smale) concepts coincide with the structural stability.

Based on all our previous results, we will discuss that, for {\it eventually cooperative or competitive systems} generated by $C^1$ vector field $F$ on $X\subset \RR^3$, $F|_M$ is Kupka-Smale if and only if it satisfies Morse-Smale conditions on $M$; and hence, for such systems, Kupka-Smale (and Morse-Smale) concepts coincide with the structural stability (see Theorem \ref{structural stability thm} below).

As we mentioned in the introduction, eventually cooperative (or competitive) systems are
NOT limited to near cooperative (or competitive) systems in the context of perturbation theory. Therefore, the results presented here exhibit the significance for developing the theory of eventual cooperative (or competitive) systems.
\vskip 2mm

We first present the following technical lemma, due to Hirsch \cite[p.1230]{H90}, which also remains true for 3-dimensional eventually competitive or cooperative systems. In the following, we call a \textit{circuit} as a sequence of equilibria $p_{0},\dots, p_{n}=p_{0}$, $n\geqslant 1$, such that $W^{u}(p_{i-1})\cap W^{s}(p_{i})\neq \emptyset$, where $W^{u}(p)$ and $W^{s}(p)$ is the unstable and stable manifolds at $p$, respectively.

\begin{lem}\label{porbits-collection}
Let $F$ be a $C^1$-vector field on $X$ which generates an eventually cooperative or competitive system and $F$ is transverse to $\partial M$. Assume that
all equilibria and cycles of $F|_M$ are hyperbolic and there are no circuits. Let $\{C_{n}\}$ be an infinite sequence of distinct cycles in $M$. Then there exists a subsequence of $\{C_{n}\}$, still denoted by $\{C_{n}\}$, satisfying that,
for any $n$ and any $x\in C_{n}$ there exist $m>n$ and $y\in C_{m}$ such that $x\gg y$ or $x\ll y$.
\end{lem}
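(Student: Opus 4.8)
The plan is to strip the three-dimensional statement down to a two-dimensional, essentially topological assertion by projecting along the cone direction, and then to recover the order relation from the collapsed fibre. First I would use the compactness of $M$: regarded as compact subsets of $M$, the cycles admit a Hausdorff-convergent subsequence, still written $\{C_{n}\}$, with limit a nonempty compact invariant set $K$. Each $C_{n}$ is a nontrivial periodic orbit, so by Remark \ref{nonordering of periodic orbit} it contains no two points related by ``$<$''; since two points with equal image under the projection $\Theta$ of \eqref{theta} differ by a multiple of $v\in\text{Int} C$ and are therefore $\gg$- or $\ll$-related, the restriction $\Theta|_{C_{n}}$ is injective and $J_{n}:=\Theta(C_{n})$ is a Jordan curve in the plane $H_{v}\cong\RR^{N-1}$, with $J_{n}\to\Theta(K)$. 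I would record at once the \emph{free} part of the conclusion: whenever $\Theta(x)=\Theta(y)$ with $x\in C_{n}$, $y\in C_{m}$ and $x\neq y$, one automatically gets $x\gg y$ or $x\ll y$, so coincidences of the projected curves already supply comparisons at those points.

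The real work is to promote this to \emph{every} point of $C_{n}$. Distinct cycles are disjoint in $\RR^{3}$, so the projected curves are either crossing or disjoint; using the convergence $J_{n}\to\Theta(K)$ together with the Jordan curve theorem, I would pass to a further subsequence along which the curves are monotonically nested, say $J_{m}$ lying in the bounded component determined by $J_{n}$ for every $m>n$. Because $\Theta$ collapses exactly the fibre direction $v\in\text{Int} C$, the assertion ``$x\gg y$ (resp.\ $x\ll y$) for some $y\in C_{m}$'' is equivalent to the closed curve $C_{m}$ meeting the open downward cone $x-\text{Int} C$ (resp.\ the upward cone $x+\text{Int} C$). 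Thus the goal becomes the topological statement that a nested inner cycle $C_{m}$ must enter the double cone at $x$. Passing to the nested subsequence once and for all makes a single choice of $m$ work for each fixed $n$ and, after a diagonal extraction that also fixes the inside/outside orientation, for all $n$ simultaneously.

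The eventual competitiveness enters the argument only indirectly, which is what makes the proof tractable. The facts I rely on --- non-ordering of limit sets (Theorem \ref{nonordering thm}), the planar reduction of limit sets (Theorem \ref{topol-equivalent}) and Poincar\'e--Bendixson (Theorem \ref{P-B thm}), together with Remark \ref{nonordering of periodic orbit} --- have already absorbed the transient $t_{*}$ through the Non-oscillation Principle (Theorem \ref{nonoscillation thm}). Wherever the flow itself must be invoked to realise a comparison, I would transport the relevant order relation along an orbit over a time span of length $\geqslant t_{*}$; since each $C_{m}$ is invariant, such a long-time transport never leaves $C_{m}$, so the initial transient is harmless --- precisely the device by which Theorem \ref{nonoscillation thm} defeats $t_{*}$ using translations along a complete orbit.

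The main obstacle is the middle step: forcing the closed curve $C_{m}$ into the open double cone at an arbitrary $x\in C_{n}$. I expect to handle this by a winding/separation argument --- the inner Jordan curve $J_{m}$ separates $\Theta(x)$ from $\Theta(K)$, and tracking the $v$-height of $C_{m}$ around its loop, against the graph structure of the reduced set supplied by Theorem \ref{topol-equivalent}, should compel $C_{m}$ to dip into $x\pm\text{Int} C$. Two points must be controlled here. First, comparability of the projections is necessary but not sufficient for comparability in $\RR^{3}$, so the height along the fibre has to be monitored carefully rather than read off from the plane. Second, one must guarantee that the nested structure of $\{J_{n}\}$ persists and is not destroyed by wild accumulation; this is exactly where the hyperbolicity of all equilibria and cycles and the absence of circuits are used, ruling out graphic-type or center-type accumulation of $K$ and thereby keeping the limiting geometry tame enough for the winding argument to close.
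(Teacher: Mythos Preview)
The paper does not reprove this lemma; it cites Hirsch \cite[p.1230]{H90} and records the single observation needed to transfer that argument to the eventually-competitive setting: hyperbolicity of the cycles forces their periods $T_n$ to be unbounded (only finitely many cycles of period $\le T$ for each fixed $T$). This unboundedness is the engine of Hirsch's proof. Your proposal attempts an independent geometric argument and never invokes $T_n\to\infty$; without that input the plan cannot close.

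Two specific steps in your outline are not arguments. First, Hausdorff convergence $J_n\to\Theta(K)$ does not yield a nested subsequence of Jordan curves: two Jordan curves can be $\varepsilon$-close in Hausdorff distance and still cross (two nearly concentric ellipses with slightly rotated axes, say), and nothing in your setup selects nesting over crossing. Second --- and this is the real gap --- even granting nesting, your ``winding/separation'' paragraph supplies no mechanism forcing $C_m$ into the solid double cone $(x+\text{Int}\,C)\cup(x-\text{Int}\,C)$ at an \emph{arbitrary} $x\in C_n$. You correctly note that planar incidence constrains only the $\Theta$-image and not the $v$-height, but then leave the height control as something that ``should'' follow from a graph/winding picture you never specify. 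Crossings of $J_n$ and $J_m$ do give $\ll$/$\gg$ relations at the crossing fibres, but the lemma asks for a comparison at \emph{every} $x\in C_n$, and nested curves give no crossings at all. In Hirsch's route the hyperbolicity and no-circuit hypotheses are used upstream to force $T_n\to\infty$, and it is this dynamical fact --- not the limiting topology of $K$ --- that produces the comparisons the lemma claims.
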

\begin{proof}
See Hirsch \cite[p.1230, Lemma]{H90}, because the hyperbolicity of cycles implies that for any real number $T>0$, the number of cycles in $M$ having period no more than $T$ is finite (cf. \cite[Theorem 2]{H90}).
\end{proof}

In order to state the following main result in this section, we further assume that the flow $\phi_{t}$ generated by $F$ is \textit{eventually strongly cooperative} (resp. {\it eventually strongly competitive}), that is, $\phi_t$ is eventually cooperative (resp. eventually competitive) and there is a $\tau_{*}>0$ such that $\phi_{t}(x)\ll \phi_{t}(y)$ whenever $x<y$ and $t\geqslant \tau_*$ (resp. $t\leqslant -\tau_*$) (see e.g., \cite[p.428]{H85} or \cite[p.4]{SM17}).

\begin{thm}\label{structural stability thm}
Let $F$ be a $C^1$-vector field on $X$ which generates an eventually strongly cooperative or competitive system and $F$ is transverse to $\partial M$. If $F|_M$ is Kupka-Smale, then $F|_M$ satisfies Morse-Smale conditions and therefore structurally stable. Conversely, if $F|_M$ is structurally stable, then it satisfies Morse-Smale conditions.
\end{thm}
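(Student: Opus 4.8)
The plan is to mirror Hirsch's strategy \cite{H90} for genuinely monotone $3$-dimensional systems, replacing each appeal to order-preservation by its eventual counterpart and exploiting that the Poincar\'{e}-Bendixson Theorem (Theorem \ref{P-B thm}), the Non-ordering of limit sets (Theorem \ref{nonordering thm}), Remark \ref{nonordering of periodic orbit}, and Lemma \ref{porbits-collection} are all already available in the eventual setting. I would first dispose of the converse by a soft argument: a structurally stable vector field is automatically Kupka-Smale (\cite{Markus1961}), so once the forward implication ``Kupka-Smale $\Rightarrow$ Morse-Smale'' is proved, structural stability yields Kupka-Smale and hence Morse-Smale. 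Combined with the classical fact that Morse-Smale conditions imply structural stability (\cite{Palis1969}), the whole theorem reduces to showing that, assuming (H1), both (H2) and (H3) hold.

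Assume (H1). Since every equilibrium is hyperbolic and $M$ is compact, equilibria are isolated and thus finite in number, which settles half of (H2). The first substantive step is to rule out circuits. Given a heteroclinic connection $W^{u}(p_{i-1})\cap W^{s}(p_{i})\neq\emptyset$, I would choose a point $x$ on such a connecting orbit, so that $\alpha(x)=\{p_{i-1}\}$ and $\omega(x)=\{p_{i}\}$. Because this is purely asymptotic data, one may translate time past $\pm t_*$ (resp. $\pm\tau_*$) and invoke eventual strong monotonicity exactly as in the proof of Theorem \ref{P-B thm}; the local behaviour near the hyperbolic saddles then forces $p_{i-1}$ and $p_{i}$ to be strictly ordered, say $p_{i-1}\ll p_{i}$. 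A circuit $p_{0},\dots,p_{n}=p_{0}$ would then produce a chain $p_{0}\ll p_{1}\ll\cdots\ll p_{n}=p_{0}$, contradicting the antisymmetry of the order induced by the cone $C$. Hence there are no circuits and Lemma \ref{porbits-collection} becomes applicable.

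I would then complete (H2) by proving finiteness of cycles. Suppose there were infinitely many distinct cycles $\{C_{n}\}$; Lemma \ref{porbits-collection} furnishes a subsequence along which each $C_{n}$ lies strictly below or above a later $C_{m}$. Passing to a monotone, hence convergent (by compactness of $M$), sub-chain, the limit is again a compact limit set, which by Theorem \ref{P-B thm} together with the exclusion of circuits is an equilibrium or a single cycle, and therefore isolated among cycles by hyperbolicity. This contradicts the accumulation of infinitely many distinct cycles. For (H3), let $L$ be a compact limit set. If $L$ contains no equilibrium, Theorem \ref{P-B thm} immediately makes it a cycle. If $L$ contains an equilibrium yet is not a single equilibrium, then the planar reduction of Theorem \ref{topol-equivalent} and the two-dimensional Poincar\'{e}-Bendixson structure force $L$ to be a graphic, i.e. a cyclic chain of equilibria joined by connecting orbits; but this is precisely a circuit (including the homoclinic case $n=1$), already excluded. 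Thus $L$ is a single equilibrium or a cycle, giving (H3). With (H1)--(H3) in force, $F|_M$ satisfies the Morse-Smale conditions and is structurally stable by \cite{Palis1969}, and the converse follows as indicated.

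I expect the main obstacle to be the circuit-exclusion step and, more generally, verifying that the orderings extracted from connecting orbits and from the cycle sequence in Lemma \ref{porbits-collection} remain \emph{strict} under only eventual strong monotonicity. The subtlety is that order-preservation is unavailable during the transient $[0,t_*)$ (resp. $(-\tau_*,0]$), so one cannot compare trajectories naively. The resolution, as in the proof of Theorem \ref{P-B thm}, is that every comparison we require is asymptotic: one may always wait past the transient before comparing, after which eventual strong monotonicity applies verbatim, and the local saddle structure supplied by hyperbolicity upgrades ``$\leqslant$'' to ``$\ll$''.
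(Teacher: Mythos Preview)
Your overall architecture (reduce to Kupka-Smale $\Rightarrow$ Morse-Smale, then verify (H2) and (H3)) is the same as the paper's, but you misplace the difficulty and leave the substantive step unproved.

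\textbf{Circuit exclusion.} You propose to exclude circuits by showing that a connecting orbit forces $p_{i-1}\ll p_i$. This is neither necessary nor clearly correct. It is not necessary because the transversality hypothesis in (H1) already excludes circuits by the standard index-count: a nonempty transverse intersection $W^{u}(p_{i-1})\cap W^{s}(p_i)$ in $\mathbb{R}^3$ forces $\dim W^{u}(p_{i-1})>\dim W^{u}(p_i)$, so a closed chain is impossible (this is exactly what the paper invokes, in one line, following Hirsch \cite{H90}). It is not clearly correct because your justification (``local behaviour near the hyperbolic saddles'') is a gesture, not an argument; nothing in eventual strong monotonicity tells you that the endpoints of an arbitrary connecting orbit are order-related, and the proof of Theorem~\ref{P-B thm} does not establish anything of this kind.

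\textbf{Finiteness of cycles.} This is where the real work lies, and your paragraph skips it. After Lemma~\ref{porbits-collection} you write ``passing to a monotone, hence convergent, sub-chain, the limit is again a compact limit set, which by Theorem~\ref{P-B thm} together with the exclusion of circuits is an equilibrium or a single cycle, and therefore isolated.'' There are two problems. First, what ``limit'' are you taking? You obtain points $x_n\in C_n$ with $x_n\to q$, but the periods $T_n\to\infty$ (hyperbolicity forces only finitely many cycles of bounded period), so the cycles $C_n$ themselves need not accumulate on any compact set; you cannot simply say they converge to a cycle or equilibrium. Second, your classification of the putative limit set as ``equilibrium or cycle'' is essentially (H3), whose proof in the paper explicitly uses (H2); as written your argument is circular. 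The paper avoids both issues by a direct argument: from $x_n\ll x_{n+1}$ and \cite[Theorem 3.8]{H85} one gets $C_n\ll C_{n+1}$, and this total ordering is used to show that $\alpha(q)$ is a \emph{single point} $p$ (any two subsequential limits are forced to satisfy $p_1\leqslant p_2$ and $p_2\leqslant p_1$). One then proves $C_n\ll p$ for all $n$, and finally, using eventual competitiveness applied across full periods, that every neighborhood of $p$ contains an entire $C_{n_k}$, contradicting hyperbolicity of $p$. None of these steps is present in your outline, and each genuinely uses the eventual order-preservation with a shift by a multiple of a period to get past $t_*$.

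In short: drop the ordering approach to circuits (use transversality), and supply the missing argument for cycle finiteness along the lines above; your treatment of (H3) and of the converse is fine.
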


\begin{proof}
We here consider the eventually strongly competitive system $\{\phi_{t}\}$, while the eventually strongly cooperative system is analogous. We will show the Kupka-Smale condition (H1) will imply (H2)-(H3).

For (H2), the hyperbolicity in (H1) clearly implies that the set of equilibria in $M$ is finite; and for any real number $T>0$, the number of cycles in $M$ having period no more than $T$ is finite. Moreover, the transversality of their stable and unstable manifolds rules out the occurrence of circuits.

We now show that the numbers of cycles are finite. Suppose that in $M$ there is an infinite sequence $\{C_{n}\}$ of distinct cycles, with periods $\{T_{n}\}$. Then, by Lemma \ref{porbits-collection}, one can choose a subsequence of $\{C_{n}\}$, still denoted by $\{C_{n}\}$, with points $x_{n}\in C_{n}$, $v_{n+1}\in C_{n+1}$ such that $x_{n} \ll v_{n+1}$ for all $n$, or $x_{n}\gg v_{n+1}$ for all $n$. Without loss of generality, we assume the former holds.

So, we have two order relationships $x_{n}\ll v_{n+1}$ and $x_{n+1}\ll v_{n+2}$, where $x_{n}\in C_{n}$, $x_{n+1},v_{n+1}\in C_{n+1}$, $v_{n+2}\in C_{n+2}$. Choose some $s\in (0, T_{n+1}]$, $n_{1}\in \mathbb{Z}_{+}$ such that $n_{1}T_{n+1}\geqslant t_{*}$ and $\phi_{-s-n_{1}T_{n+1}}(x_{n+1})=v_{n+1}$. Then it follows from the {\it eventually} competitive property that $\phi_{-s-n_{1}T_{n+1}}(x_{n+1})\ll \phi_{-s-n_{1}T_{n+1}}(v_{n+2})\in C_{n+2}$. Let $\bar{x}_{n+1}=v_{n+1}$, $\bar{v}_{n+2}=\phi_{-s-n_{1}T_{n+1}}(v_{n+2})$. Then we obtain $x_{n}\ll \bar{x}_{n+1}\ll \bar{v}_{n+2}$. By repeating this process, we recursively obtain a sequence, still denoted by $\{x_n\}$, such that $x_{n}\in C_{n}$ and $x_{n} \ll x_{n+1}$ for any $n\geqslant 1$.

Without changing notation, we take a subsequence $\{x_n\}$ such that $x_{n} \to q\in M$.
Then, we {\it claim that the $\alpha$-limit set $\alpha (q)=p\in E$}. Otherwise, there exist $p_{1}, p_{2}\in \alpha (q)$ with $p_{1}\neq p_{2}$. Since $p_{1}\in \alpha(q)$, one can choose two subsequences $t_{k}\to \infty$ and $\{x_{k}\}\subset \{x_{n}\}$ such that $x_{k}\to q$ and $\phi_{-t_{k}}(x_{k})\to p_{1}$ as $k\to \infty$. For $p_{2}\in \alpha(q)$, one can further choose two subsequences $\tau_{m}\to \infty$ and $\{x_{m}\}\subset \{x_{k}\}$ such that $x_{m}\to q$ and $\phi_{-\tau_{m}}(x_{m})\to p_{2}$ as $m\to \infty$. For brevity, we write $z_{m}=\phi_{-t_{m}}(x_{m})$ and $w_{m}=\phi_{-\tau_{m}}(x_{m})$. Clearly, $z_{m},w_{m}\in C_{m}$ with $z_{m}\to p_{1}$ and $w_{m}\to p_{2}$. Since $x_{m}\ll x_{m+1}$ and those cycles are distinct, it then follows from \cite[p.434, Theorem 3.8]{H85} that $C_{m}\ll C_{m+1}$. Here $C_{m}\ll C_{m+1}$ means that $a\ll b$, for any $a\in C_{m}$ and $b\in C_{m+1}$. As a consequence, $z_{m}\ll w_{m+1}$ and $w_{m}\ll z_{m+1}$ hold for all $m$, which implies that $p_{1}\leqslant p_{2}$ and $p_{2}\leqslant p_{1}$. Hence, $p_1=p_2$, a contradiction. This proves the claim.

We will show that $C_{n} \ll p$ for any $n\geqslant 1$. Indeed, for any $z\in C_{n}$, there exists a sequence $t_{i} \to \infty$ such that $\phi_{-t_{i}}(x_{n}) \to z$ as $i\to \infty$. Meanwhile, $\phi_{-t_{i}}(q) \to p$ as $i \to \infty$. Since $x_{n} \ll q$, one has $\phi_{-t_{i}}(x_{n}) \ll \phi_{-t_{i}}(q)$ for all $t_{i}\geqslant t_{*}$. Hence, $z\leqslant p$.  By arbitrariness of $z$ and $n$, we have $x_{n} \ll x_{n+1} \leqslant p$ for all $n$. Moreover, noticing that $z=\phi_{-s-n_{0}T_{n}}(x_{n})$, for some $s\in (0, T_{n}]$ and some $n_{0}$ with $n_{0}T_{n}\geqslant t_{*}$, it follows that $z=\phi_{-s-n_{0}T_{n}}(x_{n}) \ll \phi_{-s-n_{0}T_{n}}(p) = p$, which implies that $C_{n}\ll p$ for all $n\geqslant 1$.

In the following, we will deduce a contradiction to the hyperbolicity of $p$, by showing that any neighborhood $U$ of $p$ will contain some $C_{n} \subset U$. In fact, since $x_{n}\to q$ and $\alpha (q)=p$, there exist two sequences $t_{k}\to \infty$ and $\{x_{n_{k}}\}\subset \{x_{n}\}$ such that $\phi_{-t_{k}}(x_{n_{k}})\to p$ as $k\to \infty$. For brevity, we write $z_{n_{k}}=\phi_{-t_{k}}(x_{n_{k}})$. Clearly, $z_{n_{k}}\in C_{n_{k}}\ll p$ and $z_{n_{k}}\to p$. Choose some $k_{0}>0$ such that $\{x:z_{n_{k_{0}}}\ll x\ll p\}\subset U$.
Let $d$ denote the least upper bound
of $C_{n_{k_{0}}}$ in $X$. Then $C_{n_{k_{0}}}\leqslant d\ll p$. Since $z_{n_{k}}\to p$, one can find $k_{1}>k_{0}$ such that $d\ll z_{n_{k_{1}}}\ll p$. Using eventually competitive property, we obtain $\phi_{-t-n_{1}T_{n_{k_{1}}}}(d) \ll \phi_{-t-n_{1}T_{n_{k_{1}}}}(z_{n_{k_{1}}}) \ll p$ for $\forall t\in (0, T_{n_{k_{1}}}]$ and some $n_{1}$ with $n_{1}T_{n_{k_{1}}}\geqslant t_{*}$. Since $C_{n_{k_{0}}}$ is invariant, we have $C_{n_{k_{0}}}\leqslant \phi_{-t-n_{1}T_{n_{k_{1}}}}(d)$. In particular, $z_{n_{k_{0}}}\leqslant \phi_{-t-n_{1}T_{n_{k_{1}}}}(d)$. Therefore, $z_{n_{k_{0}}}\ll \phi_{-t-n_{1}T_{n_{k_{1}}}}(z_{n_{k_{1}}}) \ll p$ for $\forall t\in (0, T_{n_{k_{1}}}]$, which implies that $C_{n_{k_{1}}}\subset U$. Thus, we have obtained (H2).

To prove (H3), we note that, by Theorem \ref{topol-equivalent}, the flow on the limit set $L$ is topologically equivalent to the Lipschitz planar flow $\psi_t$ on a compact, connected, chain-recurrent invariant set $\Theta(L)$. Together with (H2), the generalized Poincar\'{e}-Bendixson theorem (See, e.g. Hirsch \cite[p.1231, Remark]{H90}) implies that
$\Theta(L)$ consists of a finite number of equilibria, periodic orbits and, possibly, entire orbits whose $\omega$- and $\alpha$-limit sets are periodic orbits or equilibria contained in $\Theta(L)$. By the same argument in \cite[Theorem 4.1]{S95}, the chain-recurrence of $\Theta(L)$ rules out the possibility of the entire orbit whose $\omega$- or $\alpha$-limit set is a periodic orbit. As a consequence, $\Theta(L)$ only consists of a finite number of equilibria, periodic orbits and, possibly, entire orbits connecting equilibria in $\Theta(L)$. Moreover, again by its chain-recurrence, $\Theta(L)$ will contain a circuit whenever it admits entire orbits connecting equilibria. Since the  transversality in (H1) rules out the circuit in $\Theta(L)$, we obtain that $\Theta(L)$ only consists of a finite number of equilibria and periodic orbits; and hence, $\Theta(L)$ is just an equilibrium or a periodic orbit because of the connectedness of $\Theta(L)$, and so is $L$.
Thus, we have completed the proof.
\end{proof}







\end{document}